\newtheorem{theorem}{Theorem}
\newtheorem{remark}[theorem]{Remark}
\newtheorem{definition}{Definition}
\newcommand{\R}{{\mathbb{R}}}
\newcommand{\Q}{{\mathbb{Q}}}
\def\be{\begin{equation}}
\def\ee{\end{equation}}
\def\ba{\begin{array}}
\def\ea{\end{array}}
\def\vp{\varphi}
\title{Measurable process selection theorem and non-autonomous inclusions}
\author{Jorge E. Cardona and Lev Kapitanski}
\affil{ 
Department of Mathematics, University of Miami, Coral Gables, FL 33124, USA}
\date{}
\begin{document} 

\maketitle

\begin{abstract} 
A semi-process is an analog of the semi-flow for non-autonomous differential equations or inclusions. 
We prove an abstract result on the existence of measurable semi-processes in the situations where 
there is no uniqueness. Also, we allow solutions to blow up in finite time and then obtain local semi-processes.    
\end{abstract}


\section{Introduction}

Let 
\be\label{aut}
\frac{du}{dt} = f(u)
\ee
be an archetypical autonomous differential equation. Autonomous refers to the structure of the equation and means that the independent variable, $t$, does not appear explicitly (independently) in the equation. 
Because of that, \eqref{aut} is invariant under the time shift (translation) $\theta_\tau: t\mapsto t+\tau$, and if $u(\cdot)$ is a solution of \eqref{aut}, then $\theta_\tau u(\cdot) = u(\cdot + \tau)$ is a solution as well. Suppose \eqref{aut} describes evolution/dynamics on some set $X$ (which could be a finite- or infinite-dimensional vector space or manifold). Given an $a\in X$, let $u(t, a)$, $t\in [0, +\infty)$ be a solution of \eqref{aut} starting at $u(0, a) = a$ (let us assume that global solutions exist forward in time). If $v(\cdot, u(t_1, a))$ is a solution of \eqref{aut} starting (at $t = 0$) from the point $u(t_1, a)$, we 
can splice $u$ and $v$ and obtain a (possibly new) solution $w = u \underset{t_1}{\bowtie} v$ starting at $a$:
\be
w(t) = u \underset{t_1}{\bowtie} v(t) = \begin{cases}
u(t, a)\,, & \text{if}\; 0\le t\le t_1, \\ 
v(t - t_1, u(t_1, a))\,, & \text{if}\; t\ge t_1\,.
\end{cases}
\ee
If solutions are unique (for every $a\in X$ there is a unique solution $u(t, a)$), then 
$v(t, u(t_1, a)) = u(t + t_1, a)$. In general, in the case of uniqueness, the solutions of \eqref{aut} 
enjoy  
the semigroup property, i.e., for every $a\in X$, $u(0, a) = a$ and 
\be\label{sg}
u(t_2, u(t_1, a)) = u(t_1 + t_2, a)\,,\quad \forall t_1, t_2 \ge 0\,.
\ee
This allows us to define the semigroup $U(t): X\to X$ by the formula $U(t)(a) = u(t, a)$.

For non-autonomous differential equations the situation is similar and different. Consider 
an archetypical non-autonomous equation 
\be\label{n-aut}
\frac{du}{dt} = g(t, u)\,.
\ee
Now, in addition to the initial position/state $a\in X$, it is important to specify the initial moment of time, $t_0$. The solution(s) will depend on $a$ and $t_0$; we write $u(t; t_0, a)$ to denote a solution of \eqref{n-aut} for $t\ge t_0$ that equals $a$ when $t = t_0$. If we follow the solution 
$u(t; t_0, a)$ until the moment $t = t_1$ and then follow a solution $v(t; t_1, u(t_1; t_0, a))$ that 
starts at the point $u(t_1; t_0, a)$ at the moment $t_1$, we obtain a spliced solution of \eqref{n-aut}, 
\be\label{splice}
w(t; t_0, a) = u \underset{t_1}{\bowtie} v(t) = \begin{cases}
u(t; t_0,  a)\,, & \text{if}\; t_0\le t\le t_1, \\ 
v(t; t_1, u(t_1;, t_0,  a))\,, & \text{if}\; t\ge t_1\,.
\end{cases}
\ee
If the solutions of equation \eqref{n-aut} are unique, we have the following analog of the semigroup property:  
\be\label{n-sg}
u(t_0; t_0, a) = a \quad\text{and}\quad u(t_2; t_1, u(t_1; t_0, a)) = u(t_2; t_0, a)\,,\quad \forall a\in X\;\forall t_2\ge t_1\ge t_0\ge 0\,.
\ee
Also, we can define the \textit{transition} map $U(t_1; t_0)$ that maps $X$ into $X$ by assigning to every $a\in X$ the 
value $u(t_1; t_0, a)$ of the solution $u(t; t_0, a)$. This transition map has the properties 
\begin{subequations}\label{NSG:main}
\begin{align}
& U(t; t) = \hbox{id}_X\,,\;\forall t\ge 0 \label{NSG:a}\\
& U(t_2; t_1)\circ U(t_1; t_0) = U(t_2; t_0)\,,\;\forall t_2\ge t_1\ge t_0\ge 0\,.\label{NSG:b}
\end{align}
\end{subequations}
A family of maps $U(t_1; t_0)$ with the properties \eqref{NSG:main} will be called a \textit{process} 
(see, e.g., \cite{Dafermos, Hale}). To the autonomous case correspond 
homogeneous processes characterized by time invariance: $U(t_1-\tau; t_0 - \tau) = U(t_1; t_0)$ for all 
(admissible) $\tau$ (and therefore $U(t) = U(t; 0)$ is the semigroup). 
 
It may be advantageous to think of solutions of the autonomous equation \eqref{aut} as integral curves (trajectories) in $X$, 
i.e., view solitons as continuous (infinite, one-sided) paths in $X$. If there is no uniqueness, the solutions/integral curves starting at the point $a$ form an integral funnel, $S(a)$, a subset of all paths starting at $a$. (Analysis of integral funnels for ODEs was initiated by H. Kneser in the 1920s, \cite{Kneser}.) 
Denote by $\Omega$ the space of all continuous (infinite, one-sided) paths in  $X$. Then the map 
$a\mapsto S(a)$ is a set-valued map (other names: multifunction, correspondence) from $X$ into $2^\Omega$. It has the (already mentioned) properties: if the path $w$ is in $S(a)$, then its shift, $\theta_\tau w$ is in $S(w(\tau))$, and if $u\in S(a)$ and $v\in S(u(t_1))$, then $u \underset{t_1}{\bowtie} v\in S(a)$.  

An interesting and important question is whether it is possible to select a solution $u(\cdot, a)$ from 
every funnel $S(a)$ in such a way, that $u(t, a)$ has the semigroup property \eqref{sg}. In other words, is it possible to define a semigroup (semiflow) $U(t)$ so that, for every $a\in X$, $u(t, a) = U(t)(a)$, $t\ge 0$,  is a path in $S(a)$? In \cite{C,C-K}, we show that the answer is 
yes under some very general assumptions. Moreover, we show that the selection $a\mapsto u(\cdot, a)\in S(a)$ is measurable. This is a new type of selection theorems (for measurable selection results see, e.g., \cite[Section 18.3]{A-B}, \cite{H-P}, and the surveys \cite{Wa, Io}). Our results were 
motivated by the Markov selection theorems, see \cite{Krylov,S-V,F-R,G-R-Z}.

Here, we extend the semiflow selection theorem of \cite{C,C-K} to non-autonomous equations and processes. 
If there is no uniqueness for \eqref{n-aut}, but the solutions to the initial-value problem exist forward in time, 
we have the integral funnels $S(t_0, a)$ formed by all the solution $u(t; t_0, a)$, $t\ge t_0$, 
such that 
$u(t_0; t_0, a) = a$. The question we ask is whether there exists a measurable selection 
$(t_0, a) \mapsto u(\cdot; t_0, a)\in S(t_0, a)$ such that $u(t; t_0, a)$ satisfies \eqref{n-sg}. 
The answer again is yes under the right assumptions. After our previous work \cite{C, C-K}, this answer is not surprising if we are concerned with solutions of the non-autonomous equation \eqref{n-aut}: one could 
replace \eqref{n-aut} with the equivalent autonomous system
\be\label{n-aut-2}
\frac{du}{ds} = g(t, u)\,,\quad \frac{dt}{ds} = 1\,.
\ee
Then, if we apply our results from \cite{C,C-K} and obtain a semigroup ${\tilde U}(s)$, $s\ge 0$, on $X\times [0, +\infty)$ corresponding to \eqref{n-aut-2}, the maps 
$U(t_1, t_0)(a) = {\tilde U}(t_1)(a, t_0)$ would form a process corresponding to \eqref{n-aut}. However, 
if we are in a more general setting and deal with integral funnels, their \textit{autonomisation} is not clear.  
Thus, 
in the next section we present the precise statement and the proof of the existence of a measurable process. After that, in Section~\ref{loc-proc} we study the existence of \textit{local processes}. These 
apply to the situations, where in addition to non-uniqueness we allow solutions to blow up in finite time. [In the 1960--70s there was some interest in abstracting the dynamical and semi-dynamical systems and processes, see, e.g., \cite{Hajek-1,Hajek-2,B-H}. Our approach involves  funnels and is different.]
An example on the semi-process selection is presented in Section~\ref{sec:ex}.  

A different and important point of view on non-autonomous dynamics involves the skew-product construction, see, e.g., \cite{S-S,S}. The skew-product set-up is very convenient for the study of the long term behavior of non-autonomous systems. 
However, for the measurable processes selection, we believe our direct approach is more natural. 
We should mention that 
our abstract results apply not only to non-autonomous ordinary differential equations, but to partial differential equations, to differential and difference inclusions, and to other situations where 
integral funnels make sense. 

\section{Global processes}\label{glob-proc}

Let $X$ be a separable complete metric space with metric $\rho$, which we assume to be bounded: 
$\rho(x, y)\le 1$ for all $x, y\in X$. Denote by ${\cal B}_X$ the Borel $\sigma$-algebra of $X$. 
Let $\Omega$  be the space of all continuous infinite one-side paths in $X$ equipped with the compact-open topology. The elements of $\Omega$ 
are continuous maps $u: [0, +\infty)\to X$, and convergence in $\Omega$ is the uniform convergence on every finite time interval. The space $\Omega$ is Polish; we fix a complete bounded metric on $\Omega$ by setting 
\be\label{met-Om}
d(u, v) = \sum_{\ell = 1}^\infty 2^{-\ell}\,\sup_{t\in [0, \ell]}\rho(u(t), v(t))\;[1 + \sup_{t\in [0, \ell]}\rho(u(t), v(t))]^{-1}\,.
\ee
Sometimes it is convenient  
to view the paths parametrized by $s\in [\tau, +\infty)$. We denote by $\Omega^\tau$ the space of continuous maps from $[\tau, +\infty)$ into $X$. It can be identified with the image of $\Omega$ under
the (past erasing) map $\sigma_\tau$: if $u(t)$, $t\ge 0$, is a path in $\Omega$, then the path $\sigma_\tau u : [\tau, +\infty)\to X$ is defined as $\sigma_\tau u(t) = u(t)$ for $t\ge \tau$. On the other hand, $\Omega^\tau$ can be viewed as a subset of $\Omega$ if we extend the paths $v(t)$ in  
$\Omega^\tau$ to $[0, \tau)$ as staying at $v(\tau)$:
\[
\Game_\tau : \Omega^\tau \to \Omega\,,\quad (\Game_\tau v)(t) = \begin{cases} 
v(\tau) & \text{when} 0\le t < \tau\,, \\ 
v(t) & \text{when} t \ge \tau\,. 
\end{cases}
\]
We will also use the notation $\Omega^\tau_a$ for all the paths in $\Omega^\tau$ staring at the point $a$, i.e., $v(\tau) = a$ if $v\in \Omega^\tau_a$. On occasion, it will be convenient to specify $\tau$ and $a$ in the notation for a path, e.g.,  
$v(t; \tau, a)$.

\bigskip

The integral funnels $S(t_0, a)$ will be subsets of the set  $\Omega^{t_0}_a$. Denote by $P_{cl}[\Omega]$ the space of all (bounded) closed subsets of $\Omega$ endowed with the Vietoris topology (= exponential topology), see \cite{Kuratowski}, \cite{B}, and \cite{A-B} for details on set-valued maps, their properties and, in particular, measurability. In this presentation, all set-valued maps from $X$ to $\Omega$ will have  non-empty closed values and will be viewed 
as maps from the measurable space $(X, {\cal B}_X)$ to $P_{cl}[\Omega]$ with the Vietoris topology. 
If $\Gamma$ is such a map and $A\subset \Omega$, define 
\[
\Gamma^{-}(A) = \{x\in X : \Gamma(x)\cap A \neq\emptyset\}\,.
\]
For historical reasons, there are several confusingly similar notions of measurability of set-valued maps. 
A map $\Gamma: X\to P_{cl}[\Omega]$ is \textit{weakly measurable} if $\Gamma^{-}(G)\in {\cal B}_X$ for every open set $G\subset\Omega$. $\Gamma$ is \textit{measurable} if $\Gamma^{-}(F)\in {\cal B}_X$ for every closed set $F\subset\Omega$. Since $\Omega$ is metric, if $\Gamma$ is measurable, it is weakly measurable, \cite[Lemma 18.2]{A-B}. Since $\Omega$ is in addition separable, if $\Gamma$ is \textit{compact-valued} and weakly measurable, it is measurable, \cite[Theorem 18.10]{A-B}. Thus, 
in our setting, 
for compact-valued $\Gamma: X\to P_{cl}[\Omega]$, there is no difference between weak measurability and 
measurability. 
\medskip

The fundamental result of Kuratowski and Ryll-Nardzewski, \cite[Theorem 1, p. 398]{K-R}, implies that if 
$\Gamma: X\to P_{cl}[\Omega]$ is weakly measurable, then $\Gamma$ has a measurable selection, i.e., 
there exists a single-valued map $\gamma : X\to \Omega$ such that $\gamma(x)\in \Gamma(x)$ for all $x\in X$ and 
$\gamma$ is $({\cal B}_X, {\cal B}_\Omega)$-measurable: for every Borel set $A\subset \Omega$, 
$\gamma^{-1}(A)$ is a Borel set in $X$. 

\bigskip

We introduce now abstract integral funnels $S(t_0, a)$ that have the properties prompted by the properties of integral funnels  
of solutions of equation \eqref{n-aut}. 

\begin{definition} 
$S(t_0, a)$, where $t_0\in [0, +\infty)$ and $a\in X$,  is a family of abstract integral funnels on the space $X$ if, 
for every $t_0\ge 0$, $S(t_0, \cdot) : X\to P_{cl}[\Omega]$ is a set-valued map 
with the following properties. 
\begin{enumerate}
\item[{\bf S1}] For every $a\in X$, $S(t_0, a)$ is a non-empty compact subset of $\Omega^{t_0}_a$. 
Every path $u$ in $S(t_0, a)$ is parametrized as $u(t; t_0, a)$, where $t\ge t_0$.
\item[{\bf S2}] Each map $S(t_0, \cdot)$ is measurable, i.e., for every closed set $C\subset \Omega^{t_0}$, 
\[
\{x\in X: S(t_0, x)\cap C \neq \emptyset\}\in {\cal B}_X\,.
\]
\item[{\bf S3}] If $u\in S(t_0, a)$, then $\sigma_\tau u \in S(t_0+\tau, u(t_0+\tau))$. 
\item[{\bf S4}] If $u\in S(t_0, a)$ and $v\in S(t_0+\tau, u(t_0+\tau))$, then the spliced path 
$w = u \underset{t_0+\tau}{\bowtie} v$, defined as in \eqref{splice}, belongs to the funnel 
$S(t_0, a)$.
\end{enumerate}
\end{definition}

\begin{theorem}\label{thm:main} 
Every family of abstract integral funnels $S(t_0, a)$, $t_0\ge 0$, $a\in X$, has, for every $t_0$, a measurable selection 
$ a\to u(\cdot; t_0, a)\in S(t_0, a)$ with the semigroup property \eqref{n-sg}. 
As a corollary, there is a Borel measurable (semi)process $U(t_1, t_0) : X\to X$ whose orbits are $U(t_1, t_0)(a) = u(t_l; t_0, a)$, for all $t_1 \ge t_0\ge 0$ and for all $a\in X$.
\end{theorem}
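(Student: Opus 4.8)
The plan is to adapt the iterative optimization scheme behind the Markov and semiflow selection theorems (\cite{Krylov,S-V,C,C-K}) to the present non-autonomous setting. The engine is a countable family of bounded continuous functionals on paths that separates the points of each $\Omega^{t_0}$ and is compatible with the splicing and shift structure {\bf S3}--{\bf S4}. Concretely, I would fix a countable family $\{f_k\}\subset C_b(X)$ separating the points of $X$ (e.g. $f_k(x)=\min(\rho(x,x_k),1)$ for a dense sequence $\{x_k\}$) and a countable dense set $\{\lambda_j\}\subset(0,+\infty)$, and for $u\in\Omega^{t_0}$ set
\[
I_{j,k}(u;t_0)=\int_{t_0}^{\infty}\lambda_j\,e^{-\lambda_j(s-t_0)}\,f_k(u(s))\,ds .
\]
Each $I_{j,k}(\cdot;t_0)$ is bounded and continuous on $\Omega^{t_0}$, and, because the (normalized) Laplace transform in $\lambda_j$ recovers the bounded continuous function $s\mapsto f_k(u(s))$ while the $f_k$ separate points of $X$, the countable family $\{I_{j,k}\}$, enumerated as $I_1,I_2,\dots$, separates the points of $\Omega^{t_0}$. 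The crucial design feature is that the form of $I_{j,k}$ is uniform in $t_0$.

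Next I would refine the funnels one functional at a time. Put $S^{(0)}=S$ and, having constructed a family $S^{(n-1)}$ of abstract integral funnels, define
\[
S^{(n)}(t_0,a)=\Bigl\{\,u\in S^{(n-1)}(t_0,a):\ I_n(u;t_0)=\max_{v\in S^{(n-1)}(t_0,a)}I_n(v;t_0)\,\Bigr\}.
\]
The maximum is attained by compactness ({\bf S1}) and continuity of $I_n$, so $S^{(n)}(t_0,a)$ is a nonempty compact subset of $\Omega^{t_0}_a$ ({\bf S1}). Measurability ({\bf S2}) is inherited through the measurable maximum theorem \cite[Theorem 18.19]{A-B}: the value function $a\mapsto\max_{v\in S^{(n-1)}(t_0,a)}I_n(v;t_0)$ is measurable and the argmax correspondence $a\mapsto S^{(n)}(t_0,a)$ is measurable with nonempty compact values. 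Preservation of {\bf S3} and {\bf S4} is where the design of the functionals pays off. Writing $t_1=t_0+\tau$ and splitting the integral at $t_1$, for any $u\in\Omega^{t_0}$ one has the dynamic-programming identity
\[
I_n(u;t_0)=\int_{t_0}^{t_1}\lambda_j e^{-\lambda_j(s-t_0)}f_k(u(s))\,ds+e^{-\lambda_j\tau}\,I_n(\sigma_\tau u;t_1),
\]
in which the first term depends only on $u|_{[t_0,t_1]}$ and $\sigma_\tau u\in S^{(n-1)}(t_1,u(t_1))$ by {\bf S3} at level $n-1$. Given $u\in S^{(n)}(t_0,a)$ and $v\in S^{(n)}(t_1,u(t_1))$, applying this identity to $u$ and to $w=u\underset{t_1}{\bowtie}v$ (which lies in $S^{(n-1)}(t_0,a)$ by {\bf S4} at level $n-1$), and using that $v$ maximizes $I_n(\cdot;t_1)$ over $S^{(n-1)}(t_1,u(t_1))\ni\sigma_\tau u$, gives $I_n(w;t_0)\ge I_n(u;t_0)=\max$; since $w\in S^{(n-1)}(t_0,a)$ forces the reverse inequality, $w\in S^{(n)}(t_0,a)$, which is {\bf S4}. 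An entirely parallel comparison—if $\sigma_\tau u$ failed to maximize $I_n(\cdot;t_1)$, the corresponding splice would beat the level-$n$ maximum at $(t_0,a)$—yields {\bf S3}. Thus each $S^{(n)}$ is again a family of abstract integral funnels.

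Finally I would pass to the limit. The sets $S^{(\infty)}(t_0,a)=\bigcap_{n\ge0}S^{(n)}(t_0,a)$ are nonempty and compact as nested intersections of nonempty compacta, and {\bf S1}--{\bf S4} together with measurability survive the countable decreasing intersection. Since every element of $S^{(n)}(t_0,a)$ shares the same values $I_1,\dots,I_n$, any two elements of $S^{(\infty)}(t_0,a)$ agree on the whole separating family $\{I_n\}$, so $S^{(\infty)}(t_0,a)$ is a singleton; call it $\{u(\cdot;t_0,a)\}$, and the resulting single-valued measurable $S^{(\infty)}$ furnishes the desired measurable selection $a\mapsto u(\cdot;t_0,a)$ for each $t_0$. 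The semigroup property is now immediate from single-valuedness: with $b=u(t_1;t_0,a)$, property {\bf S3} for $S^{(\infty)}$ gives $\sigma_{t_1-t_0}u(\cdot;t_0,a)\in S^{(\infty)}(t_1,b)=\{u(\cdot;t_1,b)\}$, hence $u(s;t_0,a)=u(s;t_1,b)$ for $s\ge t_1$, which is exactly \eqref{n-sg}; in particular $u(t_0;t_0,a)=a$. Setting $U(t_1,t_0)(a)=u(t_1;t_0,a)$ then yields \eqref{NSG:a}--\eqref{NSG:b}, and $U(t_1,t_0)$ is Borel as the composition of the measurable selection with the continuous evaluation $u\mapsto u(t_1)$. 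The step I expect to be the main obstacle is the preservation of measurability ({\bf S2}) under the optimization, together with the matching of the optimizations across different initial times: the former is handled by the measurable maximum theorem but relies on the compact-valuedness from {\bf S1} and on stability of measurability under countable decreasing intersections, while the latter is exactly what dictates the uniform-in-$t_0$ form of $I_{j,k}$, since the splicing/shift argument compares the level-$n$ optimization at $(t_0,a)$ with the level-$n$ optimization at $(t_1,u(t_1))$, and these two reductions must be performed with the same functional re-based at the respective starting time.
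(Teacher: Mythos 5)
Your proposal is correct and follows essentially the same route as the paper's proof: Krylov-style iterated maximization of Laplace-transform functionals $\int e^{-\lambda t}\vp(u(t_0+t))\,dt$ over the funnels, the measurable maximum theorem \cite[Theorem 18.19]{A-B} for preservation of {\bf S1}--{\bf S2}, the dynamic-programming splitting of the integral for {\bf S3}--{\bf S4}, nested intersection, and uniqueness of the Laplace transform plus point separation to conclude each limiting funnel is a singleton, whence {\bf S3} yields \eqref{n-sg}. The only differences are cosmetic (normalization of the functionals, proving {\bf S4} before {\bf S3}, and bypassing the explicit appeal to Kuratowski--Ryll-Nardzewski, which is legitimate since weak measurability of a singleton-valued map is exactly Borel measurability of the induced point map).
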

\begin{proof} We modify our proof for the autonomous case from \cite{C-K}. The plan is to 
successively reduct each funnel $S(t_0, a)$ while preserving the properties {\bf S1 - S4} and so that the limiting funnel would contain just one path; 
property {\bf S3} for the limiting funnel then spells \eqref{n-sg}. 
The reduction of the funnels is based on an idea from optimization theory that was used by N. V. Krylov 
in his proof of the Markov selection theorem, \cite{Krylov}. 
\medskip

Let $\vp: X\to [0, 1]$ be a continuous function and let $\lambda$ be a positive real number. 
For $t_0 \ge 0$ and $a\in X$, define the functional $\zeta$ on $S(t_0, a)$ via the formula
\be\label{zeta}
\zeta(w) = \int_0^\infty e^{-\lambda t} \vp(w(t_0 + t))\,dt\,.
\ee
This is a continuous functional and it attains its maximum on the compact set $S(t_0, a)$. 
Denote this maximum by $m_\zeta(t_0, a)$, 
\be\label{m}
m_\zeta(t_0, a) = \max_{w\in S(t_0, a)} \zeta(w)\,
\ee
[The function $m_\zeta(t_0, \cdot): X \to \R$ is called the value function.] Define
\be\label{V}
V_\zeta[S(t_0, a)] = \{v\in S(t_0, a):\; \zeta(v) = m_\zeta(t_0, a)\}\,.
\ee
By the so-called measurable maximum theorem, \cite[Theorem 18.19]{A-B}, $V_\zeta[S(t_0, a)]$ is a non-empty compact subset of $S(t_0, a)$, and the set-valued map $a \mapsto V_\zeta[S(t_0, a)]$ is measurable. Thus, the family of sets $V_\zeta[S(t_0, a)]$ has properties {\bf S1} and {\bf S2} of 
the abstract funnels. Let us check that it has the remaining two properties {\bf S3} and {\bf S4}. 
Suppose $u\in V_\zeta[S(t_0, a)]$ and consider the shifted path $\sigma_\tau u$. By property 
{\bf S3} of the family $S$, $\sigma_\tau u\in S(t_0+\tau, u(t_0 + \tau))$. We have to show that 
$\sigma_\tau u$ maximizes $\zeta$ in the set $S(t_0+\tau, u(t_0 + \tau))$. Pick any path $v$ in 
$S(t_0+\tau, u(t_0 + \tau))$ and consider the spliced path $w = u \underset{t_0+\tau}{\bowtie} v$, which, by property {\bf S4}, belongs to the funnel 
$S(t_0, a)$. Since $u$ maximizes $\zeta$ over  $S(t_0, a)$, $\zeta(u) \ge \zeta(w)$, i.e., 
\[
\int_0^\infty e^{-\lambda t} \vp(u(t_0 + t))\,dt \ge \int_0^\infty e^{-\lambda t} \vp(w(t_0 + t))\,dt\,.
\]
But 
\[
\begin{aligned}
& \int_0^\infty e^{-\lambda t} \vp(w(t_0 + t))\,dt = \int_0^\tau e^{-\lambda t} \vp(u(t_0 + t))\,dt + 
\int_\tau^\infty e^{-\lambda t} \vp(v(t_0 + t))\,dt = \\ 
& \int_0^\tau e^{-\lambda t} \vp(u(t_0 + t))\,dt + e^{-\lambda \tau}\,\int_0^\infty e^{-\lambda t} \vp(v(t_0 + \tau + t))\,dt\,,
\end{aligned}
\]
while
\[
\int_0^\infty e^{-\lambda t} \vp(u(t_0 + t))\,dt = \int_0^\tau e^{-\lambda t} \vp(u(t_0 + t))\,dt + e^{-\lambda \tau}\,\int_0^\infty e^{-\lambda t} \vp(u(t_0 + \tau + t))\,dt\,.
\]
Hence, 
\[
\int_0^\infty e^{-\lambda t} \vp(u(t_0 + \tau + t))\,dt \ge \int_0^\infty e^{-\lambda t} \vp(v(t_0 + \tau + t))\,dt\,,
\]
which means $\sigma_\tau u$ is a maximizer in $S(t_0+\tau, u(t_0 + \tau))$. 
To check property {\bf S4} for $V_\zeta[S]$, pick $u\in V_\zeta[S(t_0, a)]$ and $v\in V_\zeta[S(t_0+\tau, u(t_0+\tau))]$  and consider the spliced path 
$w = u \underset{t_0+\tau}{\bowtie} v$. We have to show that $w$ maximizes $\zeta$ over $S(t_0, a)$ and hence belongs to $V_\zeta[S(t_0, a)]$. This follows from a simple calculation that takes into account 
what we have just shown, that $\zeta(\sigma_\tau u) = m_\zeta(t_0 + \tau, u(t_0 + \tau))$. If 
$v\in V_\zeta[S(t_0+\tau, u(t_0+\tau))]$, then $\zeta(v) = m_\zeta(t_0 + \tau, u(t_0 + \tau))$ as well. Thus,
\[
\begin{aligned}
& \int_0^\infty e^{-\lambda t} \vp(w(t_0 + t))\,dt = \int_0^\tau e^{-\lambda t} \vp(u(t_0 + t))\,dt + 
\int_\tau^\infty e^{-\lambda t} \vp(v(t_0 + t))\,dt = \\ 
& \int_0^\tau e^{-\lambda t} \vp(u(t_0 + t))\,dt + e^{-\lambda \tau} \zeta(v) = 
\int_0^\tau e^{-\lambda t} \vp(u(t_0 + t))\,dt + e^{-\lambda \tau} \zeta(\sigma_\tau u) = \zeta(u)\,,
\end{aligned}
\]
i.e., $\zeta(w) = m_\zeta(t_0, a)$, i.e., $w\in V_\zeta[S(t_0, a)]$. To summarize, for any functional $\zeta$ of the form \eqref{zeta}, $V_\zeta[S(t_0, a)]$ is a family of abstract integral funnels.
\medskip

Now, choose a countable family $\Phi$ of continuous functions $\vp : X\to [0, 1]$ that separates the points of $X$, see \cite{B-K}. Choose some 
enumeration $(\lambda_n, \vp_n)$ of the countable set of pairs $(\lambda, \vp)$, where $\lambda$ runs through positive rational numbers and $\vp$ runs through $\Phi$. To each pair $(\lambda_n, \vp_n)$ 
corresponds the functional $\zeta_n$ via \eqref{zeta}. Define recursively the shrinking families of 
abstract integral funnels 
\[
S^0(t_0, a) = S(t_0, a), \quad S^{n}(t_0, a) = V_{\zeta_n}[S^{n-1}(t_0, a)], n = 1, 2, \dots\,.
\]
For each $(t_0, a)$, $S^{n}(t_0, a)$ is a sequence of nested compacta in $\Omega^{t_0}$.  
The intersection, 
\[
S^\infty(t_0, a) = \bigcap_{n = 0}^\infty S^{n}(t_0, a)\,,
\] 
is not empty and compact. In fact, $S^\infty(t_0, a)$ is an abstract family of integral funnels. Indeed,  
it is easy to see that properties {\bf S1}, {\bf S3}, and {\bf S4}, are satisfied. As the intersection of compact-valued maps into a Polish space, $S^\infty(t_0, \cdot)$ is measurable by \cite[Lemma 18.4]{A-B}. 
\medskip

It turns out that each funnel $S^\infty(t_0, a)$ is a singleton. Indeed, if $u, v\in S^\infty(t_0, a)$, then, for every $\vp\in \Phi$, 
\[
\int_0^\infty e^{-\lambda t} \vp(u(t_0 + t))\,dt = \int_0^\infty e^{-\lambda t} \vp(v(t_0 + t))\,dt\,,\quad \forall \lambda\in \Q_+\,.
\]
By the uniqueness of the Laplace transform, $\vp(u(t_0 + t)) = \vp(v(t_0 + t))$ for all $t\ge 0$. 
Because this is true for every $\vp\in\Phi$ and the family $\Phi$ separates the points of $X$, 
we obtain $u(t_0 + t) = v(t_0 + t)$ for all $t\ge 0$, i.e., $u = v$ as paths. 
\medskip

For $u\in S^\infty(t_0, a)$, we will use the notation $u(t; t_0, a)$, where $t\ge t_0$. If $t_1 > t_0$, then $(\sigma_{t_1 - t_0} u)(t) = u(t; t_1, u(t_1; t_0, a))$ for $t\ge t_1$, by property {\bf S3}.  
Thus, if $t_2 > t_1$, $(\sigma_{t_1 - t_0} u)(t_2) = u(t_2; t_1, u(t_1; t_0, a))$. On the other hand, 
$(\sigma_{t_1 - t_0} u)(t_2) = u(t_2; t_0, a)$ by the definition of the shift operator $\sigma_\tau$. 
This establishes the semigroup property of the measurable selection $u(t; t_0, a)$. 

Once the (measurable) selection $u$ is found, we define the process $U(t_1, t_0): X\to X$ for $t_1\ge t_0\ge 0$, by the formula $U(t_1, t_0)(a) = u(t_1; t_0, a)$. As we have shown, the map $a\mapsto \{u(\cdot; t_0, a)\}$ from $X$ to $2^\Omega$ is measurable and singleton-valued. 
By the Kuratowski-Ryll-Nardzewski selection theorem the map 
$a\mapsto u(\cdot; t_0, a)$ from $X$ to $\Omega^{t_0}$ is $({\cal B}_X,{\cal B}_\Omega)$-measurable. 
The map $U(t_1, t_0): X\to X$ is the composition of the map $a\mapsto u(\cdot; t_0, a)$ and 
the evaluation map $\pi_{t_1} : \Omega^{t_0}\owns w\to \pi_{t_1}(w) = w(t_1)\in X$, which is continuous. 
Consequently, $U(t_1, t_0)$ is Borel measurable.
 This
completes the proof.

\end{proof}

\section{Local processes}\label{loc-proc}

Let $X$ be a separable complete metric space as in the previous section. The integral funnels $S(t_0, a)$ will now be local. This means that to every initial state $a\in X$ and every initial moment $t_0$  corresponds a strictly positive number $T(t_0, a)$, the \textit{terminal time}. The paths in 
the funnel $S(t_0, a)$ form a subset in $C([t_0, t_0 + T(t_0, a))\to X)$. 
\medskip

\begin{definition}\label{loc-fun} 
A family $S(t_0, a)$, $t_0\in [0, +\infty)$, $a\in X$, will be called a family of abstract local  integral funnels with terminal times $T(t_0, a)$ 
if they satisfy the following conditions. 
\begin{enumerate}
\item[{\bf TT}] $T(t_0, a)$ is a lower semi-continuous function on $[0, +\infty)\times X$, i.e., 
if $(t_n, a_n)\to (t_0, a)$, then $T(t_0, a) \le \liminf T(t_n, a_n)$.
\item[{\bf LS1}] Every set $S(t_0, a)$ is a non-empty compact in the space $C([t_0, t_0 + T(t_0, a))\to X)$ with the topology of uniform convergence on every closed subinterval $[\alpha, \beta]$ of 
$[t_0, t_0 + T(t_0, a))$.
Every path $w(\cdot; t_0, a)\in S(t_0, a)$ is a continuous map from $[t_0, t_0 + T(t_0, a))$ into $X$, and $w(t_0; t_0, a) = a$. 
\item[{\bf LS2}] For every $t_0\ge 0$, the set-valued map $a\mapsto S(t_0, a)$ is measurable in the following sense. Each path $w$ in $S(t_0, a)$ can be re-parametrized as ${\tilde w}(s) = w(t_0 + s (T(t_0, a) - t_0))$ and then viewed as an element of the space ${\tilde \Omega} = C([0, 1)\to X)$. 
Denote by ${\tilde S}(t_0, a)$ the set $S(t_0, a)$ after such re-parametrization. We say that 
the map $a\mapsto S(t_0, a)$ is measurable if,  for any closed subset $F$ of ${\tilde \Omega}$ (with the compact-open topology), 
\[
\{a\in X: {\tilde S}(t_0, a) \cap F \neq \emptyset\}\in {\cal B}_X\,.
\]
\item[{\bf LS3}] If $u\in S(t_0, a)$ and $\tau < T(t_0, a)$, then $T(t_0 + \tau, u(t_0 + \tau; t_0, a)) \ge 
T(t_0, a) - \tau$ and 
$\sigma_\tau u \in S(t_0+\tau, u(t_0+\tau; t_0, a))$. 
\item[{\bf LS4}] If $u\in S(t_0, a)$, $\tau < T(t_0, a)$,  and $v\in S(t_0+\tau, u(t_0+\tau; t_0, a))$, then 
the spliced path 
$w = u \underset{t_0+\tau}{\bowtie} v$, defined by analogy with \eqref{splice}, belongs to the funnel 
$S(t_0, a)$.
\end{enumerate}
\end{definition}

\begin{theorem}\label{thm:2}
Every  family of local abstract integral funnels $S(t_0, a)$, $t_0\in [0, +\infty)$, $a\in X$, with terminal times $T(t_0, a)$, has a selection ${\frak u}(\cdot; t_0, a)$ with the following properties. 
\begin{enumerate}
\item[a)] For every $t_0\ge 0$, the map $ X \owns a \mapsto {\frak u}(\cdot; t_0, a)\in C([t_0, t_0 + T(t_0, a))\to X)$ is measurable. 
\item[b)] ${\frak u}(t_0; t_0, a) = a$,
\item[c)]
${\frak u}(t_2; t_1, {\frak u}(t_1; t_0, a)) = {\frak u}(t_2; t_0, a)$ 
for all $t_1\in [t_0, t_0 + T(t_0, a))$ and $t_2\in [t_1, t_0 + T(t_0, a))$.   
\end{enumerate}
\end{theorem}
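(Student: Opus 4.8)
The plan is to run the reduction-by-maximization argument of Theorem~\ref{thm:main} inside the local setting, with the finite, state-dependent terminal times $T(t_0,a)$ playing the role of the (now truncated) domain of integration. Before starting, I would record the compatibility of terminal times along funnel paths that is implicit in the definition: for $u\in S(t_0,a)$ and $0\le\tau<T(t_0,a)$, property {\bf LS3} gives $T(t_0+\tau,u(t_0+\tau))\ge T(t_0,a)-\tau$, while the splice {\bf LS4}, whose output must again be a path defined on $[t_0,t_0+T(t_0,a))$, is consistent only when $\tau+T(t_0+\tau,u(t_0+\tau))=T(t_0,a)$; hence $T(t_0+\tau,u(t_0+\tau))=T(t_0,a)-\tau$. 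This identity is exactly what makes the shift $\sigma_\tau$ and the splice $\bowtie$ domain-consistent, and it is where the lower semicontinuity hypothesis {\bf TT} is genuinely used.

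For each continuous $\vp:X\to[0,1]$ and each $\lambda\in\Q_+$ I would then define, on $S(t_0,a)$, the cut-off functional
\[
\zeta(w)=\int_0^{T(t_0,a)} e^{-\lambda t}\,\vp(w(t_0+t))\,dt,
\]
which is bounded and, by dominated convergence against the integrable weight $e^{-\lambda t}$, continuous for uniform convergence on compact subintervals of the fixed interval $[0,T(t_0,a))$; thus it attains its maximum $m_\zeta(t_0,a)$ on the compact $S(t_0,a)$, and I set $V_\zeta[S(t_0,a)]=\{v:\zeta(v)=m_\zeta(t_0,a)\}$. The shift/splice computation of Theorem~\ref{thm:main} transfers almost verbatim: using the terminal-time identity together with the memorylessness split $\int_0^{T(t_0,a)}=\int_0^\tau+e^{-\lambda\tau}\int_0^{T(t_0+\tau,\,u(t_0+\tau))}$, I would verify that $V_\zeta[S]$ again satisfies {\bf LS3} and {\bf LS4}, while {\bf LS1} comes from the measurable maximum theorem and {\bf LS2} is inherited. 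Choosing a countable family $\Phi$ separating the points of $X$ and an enumeration $(\lambda_n,\vp_n)$, I would form the nested compacta $S^{n}=V_{\zeta_n}[S^{n-1}]$ and their intersection $S^\infty(t_0,a)$, a nonempty compact local funnel. For $u,v\in S^\infty(t_0,a)$ all $\zeta$-values agree; extending $\vp(u(t_0+\cdot))-\vp(v(t_0+\cdot))$ by zero past $T(t_0,a)$ and invoking uniqueness of the Laplace transform on $[0,\infty)$ forces $\vp\circ u=\vp\circ v$ on $[t_0,t_0+T(t_0,a))$ for every $\vp\in\Phi$, whence $u=v$. So each $S^\infty(t_0,a)$ is a singleton $\{{\frak u}(\cdot;t_0,a)\}$. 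Property {\bf LS3} for $S^\infty$ then says precisely that $\sigma_{t_1-t_0}{\frak u}(\cdot;t_0,a)$ is the unique element of $S^\infty(t_1,{\frak u}(t_1;t_0,a))$, i.e. equals ${\frak u}(\cdot;t_1,{\frak u}(t_1;t_0,a))$; evaluating at $t_2$ gives the semigroup identity c), while b) is immediate from {\bf LS1}, and a) follows from Kuratowski-Ryll-Nardzewski applied to the singleton-valued measurable map.

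The main obstacle I anticipate is measurability under a moving domain of integration: the map $a\mapsto V_\zeta[S(t_0,a)]$ must be shown measurable even though both the integrand's domain $[0,T(t_0,a))$ and the funnel move with $a$, and {\bf TT} supplies only lower semicontinuity of $T(t_0,\cdot)$. The device that resolves this is the reparametrization of {\bf LS2}: transporting every funnel into the fixed Polish space $\tilde\Omega=C([0,1)\to X)$ turns $\zeta$ into a Carath\'eodory integrand whose only $a$-dependence, entering through $T(t_0,a)$, is measurable, so that \cite[Theorem 18.19]{A-B} and \cite[Lemma 18.4]{A-B} apply and yield measurability of each $V_{\zeta_n}$ and of $S^\infty$. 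A superficially cleaner route would be to globalize---adjoin a cemetery state $\infty$ to $X$, send each path to $\infty$ at its terminal time, and invoke Theorem~\ref{thm:main} directly---but arranging continuity of the extended paths at a terminal time that need not be an actual blow-up is not automatic in an abstract metric space, so I would keep the direct maximization argument as the primary route.
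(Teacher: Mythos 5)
Your proposal is correct and follows essentially the same route as the paper's proof: the truncated functionals $\zeta(w)=\int_0^{T(t_0,a)}e^{-\lambda t}\vp(w(t_0+t))\,dt$, the re-parametrization device of {\bf LS2} to run the measurable maximum theorem on the fixed space ${\tilde\Omega}=C([0,1)\to X)$, the nested maximizer sets $S^n$ and their intersection, uniqueness of the Laplace transform to get a singleton, {\bf LS3} for the semigroup identity, and Kuratowski--Ryll-Nardzewski for measurability of the selection. Your explicit terminal-time identity $T(t_0+\tau,u(t_0+\tau))=T(t_0,a)-\tau$ is a nice touch (the paper uses it only implicitly in its splitting of the integrals), though note it follows purely from the domain-consistency of {\bf LS3} and {\bf LS4}, not from the lower semicontinuity hypothesis {\bf TT} as you assert.
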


\begin{proof} We mimic the proof of Theorem~\ref{thm:main} with a few modifications. 
Let $\vp: X\to [0, 1]$ be a continuous function and let $\lambda$ be a positive real number. 
For $t_0 \ge 0$ and $a\in X$, define the functional $\zeta$ on $S(t_0, a)$ via the formula
\be\label{zeta-2}
\zeta(w) = \int_0^{T(t_0, a)} e^{-\lambda t} \vp(w(t_0 + t))\,dt\,.
\ee
This is a continuous functional and it attains its maximum on the compact set $S(t_0, a)$. 
Denote this maximum by $m_\zeta(t_0, a)$, 
\be\label{m}
m_\zeta(t_0, a) = \max_{w\in S(t_0, a)} \zeta(w)\,
\ee
and define 
\be\label{V}
V_\zeta[S(t_0, a)] = \{v\in S(t_0, a):\; \zeta(v) = m_\zeta(t_0, a)\}\,.
\ee
The way we treat measurability by re-parametrizing the paths (see property {\bf LS2}), allows us to apply 
the measurable maximum theorem, \cite[Theorem 18.19]{A-B}, and conclude that $V_\zeta[S(t_0, a)]$ is a non-empty compact subset of $S(t_0, a)$, and the set-valued map $a \mapsto V_\zeta[S(t_0, a)]$ is measurable. This shows that the family of sets $V_\zeta[S(t_0, a)]$ has properties {\bf LS1} and {\bf LS2}. 
Suppose $u(\cdot; t_0, a)\in V_\zeta[S(t_0, a)]$ and consider the shifted path $\sigma_\tau u$. By property 
{\bf LS3}, $\sigma_\tau u\in S(t_0+\tau, u(t_0 + \tau; t_0, a))$. Let us show that 
$\sigma_\tau u$ maximizes $\zeta$ in the set $S(t_0+\tau, u(t_0 + \tau; t_0, a))$. Pick any path $v$ in 
$S(t_0+\tau, u(t_0 + \tau; t_0, a))$ and consider the spliced path $w = u \underset{t_0+\tau}{\bowtie} v$, which, by property {\bf LS4}, belongs to the funnel 
$S(t_0, a)$. Since $u$ maximizes $\zeta$ over  $S(t_0, a)$, $\zeta(u) \ge \zeta(w)$, i.e., 
\[
\int_0^{T(t_0, a)} e^{-\lambda t} \vp(u(t_0 + t; t_0, a))\,dt \ge \int_0^{T(t_0, a)} e^{-\lambda t} \vp(w(t_0 + t; t_0, a))\,dt\,.
\]
Now compute
\[
\begin{aligned}
& \int_0^{T(t_0, a)} e^{-\lambda t} \vp(w(t_0 + t; t_0, a))\,dt = \\ 
& \int_0^\tau e^{-\lambda t} \vp(u(t_0 + t; t_0, a))\,dt + 
\int_\tau^{T(t_0, a)} e^{-\lambda t} \vp(v(t_0 + t; t_0 + \tau, u(t_0 + \tau; t_0, a))\,dt = \\ 
& \int_0^\tau e^{-\lambda t} \vp(u(t_0 + t; t_0, a))\,dt + e^{-\lambda \tau}\,\int_0^{T(t_0, a) 
- \tau} e^{-\lambda t} \vp(v(t_0 + \tau + t; t_0 + \tau, u(t_0 + \tau; t_0, a) ))\,dt\,,
\end{aligned}
\]
and
\[
\begin{aligned}
& \int_0^{T(t_0, a)} e^{-\lambda t} \vp(u(t_0 + t; t_0, a))\,dt = \\ 
& \int_0^\tau e^{-\lambda t} \vp(u(t_0 + t; t_0, a))\,dt + e^{-\lambda \tau}\,\int_0^{T(t_0, a) - \tau} e^{-\lambda t} \vp(u(t_0 + \tau + t; t_0, a))\,dt\,.
\end{aligned}
\]
Hence, 
\[
\begin{aligned}
& \zeta(\sigma_\tau u) = \int_0^{T(t_0, a) - \tau} e^{-\lambda t} \vp(u(t_0 + \tau + t; t_0, a))\,dt \ge 
 \\ 
& \int_0^{T(t_0, a) 
- \tau} e^{-\lambda t} \vp(v(t_0 + \tau + t; t_0 + \tau, u(t_0 + \tau; t_0, a) ))\,dt  = \zeta(v),
\end{aligned}
\]
which means $\sigma_\tau u$ is a maximizer in $S(t_0+\tau, u(t_0 + \tau; t_0, a))$. With similar modifications, following the proof of Theorem~\ref{thm:main}, one
 verifies property {\bf LS4} for $V_\zeta[S]$. Thus, for any functional $\zeta$ of the form \eqref{zeta-2}, $V_\zeta[S(t_0, a)]$ is a family of abstract local integral funnels.
\medskip

Choose a countable family $\Phi$ of continuous functions $\vp : X\to [0, 1]$ that separates the points of $X$, and choose some 
enumeration $(\lambda_n, \vp_n)$ of the countable set of pairs $(\lambda, \vp)$, where $\lambda$ runs through positive rational numbers and $\vp$ runs through $\Phi$. To each pair $(\lambda_n, \vp_n)$ 
corresponds the functional $\zeta_n$ via \eqref{zeta-2}. Define recursively the shrinking families of 
abstract integral funnels 
$S^0(t_0, a) = S(t_0, a)$, and  $S^{n}(t_0, a) = V_{\zeta_n}[S^{n-1}(t_0, a)], n = 1, 2, \dots $. 
Again, the intersection, 
\[
S^\infty(t_0, a) = \bigcap_{n = 0}^\infty S^{n}(t_0, a)\,,
\] 
is an abstract family of local integral funnels.  
To show that each funnel $S^\infty(t_0, a)$ is a singleton, assume $u, v\in S^\infty(t_0, a)$ Then, for every $\vp\in \Phi$, 
\[
\int_0^{T(t_0, a)}  e^{-\lambda t} \vp(u(t_0 + t; t_0, a))\,dt = \int_0^{T(t_0, a)} e^{-\lambda t} \vp(v(t_0 + t; t_0, a))\,dt\,,\quad \forall \lambda\in \Q_+\,.
\]
By the uniqueness of the Laplace transform, $\vp(u(t_0 + t; t_0, a)) = \vp(v(t_0 + t; t_0, a))$ for all $t\in [0, T(t_0, a))$. 
Because this is true for every $\vp\in\Phi$ and the family $\Phi$ separates the points of $X$, 
we obtain $u(t_0 + t; t_0, a) = v(t_0 + t; t_0, a)$ for all $t\in [0, T(t_0, a))$, i.e., $u = v$ as paths. 
\medskip

For the unique path in the funnel $S^\infty(t_0, a)$, let us use the notation ${\frak u}(t; t_0, a)$. 
If $t_0 < t_1 < T(t_0, a)$, then $(\sigma_{t_1 - t_0} {\frak u})(t) = {\frak u}(t; t_1, {\frak u}(t_1; t_0, a))$ for $t_1\le t < T(t_0, a)$, by property {\bf LS3}.  
Thus, if $t_2 > t_1$, $(\sigma_{t_1 - t_0} {\frak u})(t_2) = {\frak u}(t_2; t_1, {\frak u}(t_1; t_0, a))$. On the other hand, 
$(\sigma_{t_1 - t_0} {\frak u})(t_2) = {\frak u}(t_2; t_0, a)$ by the definition of the shift operator $\sigma_\tau$. 
This establishes the semigroup property of the measurable selection ${\frak u}(t; t_0, a)$. 
The fact that the maps $X\owns a \mapsto {\frak u}(\cdot; t_0, a)\in C([t_0, T(t_0, a))\to X$ are measurable is established by an argument with the distance function similar to the one in the end of the proof of Theorem~\ref{thm:main}. This completes the proof.

\end{proof}


\begin{remark}
There are differential equations for which not all initial conditions $x(t_0)=x_0$ are 
possible: there is a proper subset $C\subset (-\infty, +\infty)\times X$ of allowed initial conditions. 
This is the case, i.e., for the Clairaut equations. We discuss a particular, illustrative example 
in the next section. However, it is not hard to see that Theorems~\ref{thm:main} and \ref{thm:2} 
can be adapted for such situations. 
\end{remark}


\begin{remark}
We can introduce the local process maps $U(t_1, t_0)$ as in the previous section: $U(t_1, t_0)(a) = {\frak u}(t_1; t_0, a)$. However, now $U(t_1, t_0)$ may not be defined on all of $X$ and, for every $a$, the range of admissible $t_1$ will be different. If we restrict $t_0$ to a compact set $[\alpha, \beta]\subset [0, +\infty)$ and $a$  to a compact set $K\subset X$, then, thanks to assumption {\bf TT}, the infimum of 
$ T(t_0, a)$ over $[\alpha, \beta]\times K$ is strictly positive, and this gives a non-trivial  admissible interval for $t_1$ when $(t_0, a)\in [\alpha, \beta]\times K$.  At the moment we do not 
see any benefits in constructing an abstract theory of such local (semi)-processes. 
\end{remark}


\section{Example}\label{sec:ex}

The Clairaut equation is a scalar ODE of the form 
\be\label{eq:cl-gen}
x = t\,{\dot x} + \psi({\dot x})
\ee
with some function $\psi$. The standard method of solution is as follows. 
Differentiate \eqref{eq:cl-gen},
\[
{\dot x} = {\dot x} + t\,{\ddot x}  + \psi^\prime({\dot x})\,{\ddot x}\,,
\]
and simplify the result to obtain
\[
{\ddot x}\,\left(t + \psi^\prime({\dot x})\right) = 0\,.
\]
This offers two possibilities: 
\be\label{eq:ddot}
{\ddot x} = 0
\ee 
and/or 
\be\label{eq:other}
t + \psi^\prime({\dot x}) = 0\,. 
\ee
Solve each of the equations with the initial condition $x(t_0) = x_0$. The first equation 
implies 
\be\label{sol:ddot}
x(t) = x_0 + c\,(t - t_0)
\ee
with a constant $c$. Substitute this into the original equation \eqref{eq:cl-gen}:
\[
x_0 + c\,(t - t_0) = t\,c + \psi(c)\,.
\]
This yields the equation for the possible value(s) of $c$:
\be\label{sol:c}
x_0 = c t_0 + \psi(c)\,.
\ee
Assuming $c(t_0, x_0)$ is a solution of \eqref{sol:c} (there may be many solutions), we can re-write the 
solution \eqref{sol:ddot} as follows:
\be\label{sol:ddot-2}
x(t) = \psi(c(t_0, x_0)) + c(t_0, x_0)\,t\,.
\ee
The second equation \eqref{eq:other} requires some kind of invertibility of the function 
$\psi^\prime$ (the simplest case is when $\psi^{\prime\prime} \neq 0$, i.e., $\psi$ is convex or concave). Assuming the inverse function $(\psi^\prime)^{-1}$ makes sense, we obtain 
\be\label{eq:other-2}
{\dot x} = (\psi^\prime)^{-1}(-t)\,.
\ee 
After integration, 
\be\label{sol:other}
x(t) = x_0 + \int_{t_0}^t (\psi^\prime)^{-1}(-s)\,ds\,.
\ee
Substitute this expression into \eqref{eq:cl-gen}:
\be\label{eq:temp-11}
x_0 + \int_{t_0}^t (\psi^\prime)^{-1}(-s)\,ds = t \,(\psi^\prime)^{-1}(-t) + \psi\left((\psi^\prime)^{-1}(-t)\right)
\ee
Notice that
\[
\frac{d\hfil}{ds} \left[s \,(\psi^\prime)^{-1}(-s) + \psi\left((\psi^\prime)^{-1}(-s)\right)\right] = 
(\psi^\prime)^{-1}(-s)
\]
Thus, equation \eqref{eq:temp-11} can be simplified to
\be\label{eq:restr-2}
x_0 - \left[t_0 \,(\psi^\prime)^{-1}(-t_0) + \psi\left((\psi^\prime)^{-1}(-t_0)\right)\right] = 0
\ee
It is useful to introduce the following version of the Legendre transform of the function $\psi$:
\be\label{Leg}
{\tilde \psi}(t_*) = \inf_t\;\;[\;t_*\cdot t + \psi(t)\;]\,.
\ee
Then equation \eqref{eq:restr-2} can be written in the form 
\be\label{eq:restr-21}
x_0 = {\tilde \psi}(t_0)\,,
\ee
and the solution \eqref{sol:other} takes the form
\[
x(t) = x_0 + {\tilde \psi}(t) - {\tilde \psi}(t_0)\,. 
\]
In view of \eqref{eq:restr-21}, we obtain
\be\label{sol:other-2}
x(t) = {\tilde \psi}(t)\,.
\ee
This is the so-called singular solution. We see that the singular solution corresponds to the 
 value of $c = c(t_0, x_0)$ in \eqref{sol:ddot-2} that minimizes $[c\cdot t + \psi(c)]$. 
 \bigskip
 
Consider the special case $\psi(s) = s^2$, i.e., consider the equation
\be\label{eq:square}
x = t \,{\dot x} + ({\dot x})^2\,.
\ee
The corresponding equation \eqref{sol:c} has two solutions for $c$,
\be\label{c_0}
c_{\pm}(t_0, x_0) = - \frac{t_0}{2} \pm \sqrt{\frac{t_0^2}{4} + x_0}\,,
\ee
provided $x_0 + {t_0^2}/{4} > 0$, has one solution when $x_0 + {t_0^2}/{4} = 0$, 
and has no solutions when $x_0 + {t_0^2}/{4} < 0$. Since ${\tilde \psi}(t_*) = - t_*^2/4$, the 
singular solution is  
\be\label{sol:sing}
x(t) = - \frac{t^2}{4}\,.
\ee
Thus, the region $C\subset (-\infty,+\infty)\times \R$ of the allowed initial data $(t_0, x_0)$ 
is the parabola $x = - t^2/4$ and the points above it:
\[
C = \{(t, x): x \ge - \frac{t^2}{4}\}\,.
\]
The parabola is the envelope of 
the straight lines \eqref{sol:ddot} or, equivalently, \eqref{sol:ddot-2}:
\be\label{line}
x(t) = c^2 + c\,t\,.
\ee 
Through every point $(t_0, x_0)$ in the interior of $C$ pass two straight lines (solutions) corresponding to 
$c_+(t_0, x_0)$ and $c_-(t_0, x_0)$. It is useful to notice that one of the lines \eqref{line} touches the parabola in the past (at some moment $t = {\mathfrak t}_p(t_0, x_0) < t_0$) 
and the other touches the parabola in the future (at some moment $t = {\mathfrak t}_f(t_0, x_0) > t_0$). Let us denote these lines (solutions) ${\mathfrak x}_p(t; t_0, x_0)$ and ${\mathfrak x}_f(t; t_0, x_0)$, 
respectively. They are defined for all $t\in\R$. 
The integral funnel 
$S(t_0, x_0)$ contains the rays ${\mathfrak x}_p(t; t_0, x_0)$ and ${\mathfrak x}_f(t; t_0, x_0)$ for $t\ge t_0$. In addition, it contains infinitely many solutions that branch off ${\mathfrak x}_f(t; t_0, x_0)$: once the line touches the parabola at $t = {\mathfrak t}_f(t_0, x_0)$, the trajectory 
may continue along the parabola forever, or, at any time $r > {\mathfrak t}_f(t_0, x_0)$ it may take off 
along the tangent line. 

To every point $(t_0, x_0)$ on the boundary of $C$ (on the parabola) correspond 
infinitely many solutions of \eqref{eq:square} that can be divided into three categories: 1) there is the singular solution \eqref{sol:sing}; 
2) there is a ray tangent to the parabola: 
\be\label{tan}
{\mathfrak x}_{pf}(t; t_0, - \frac{t_0^2}{4}) = - \frac{t_0}{2}\,t + \frac{t_0^2}{4}\,,\quad t\ge t_0\;;
\ee
3) there is a one-parameter family of solutions that follow the parabola for some time and then get off on the tangent line: 
\be\label{branch}
x_r(t) = \begin{cases}
- \frac{t^2}{4} & \text{for}\quad t_0\le t \le r \\ 
- \frac{r}{2}\,t + \frac{r^2}{4} & \text{for}\quad t\ge r\,.
\end{cases}
\ee
All these solutions form the funnel $S(t_0, x_0)$ when $(t_0, x_0)$ is on the parabola. 

It is not hard to see that the trajectories that stay on the parabola for some time and then leave it
cannot be part of the semi-process. Thus, 
 there are three choices of a semi-process corresponding to 
equation \eqref{eq:square}. The first choice is completely determined by the initial conditions 
on the parabola: for $(t_*, x_*)\in\partial C$, 
\be
{\mathfrak u}(t; t_*, x_*) = - \frac{t_*}{2}\,t + \frac{t_*^2}{4}\quad \forall t\ge t_*\,.
\ee
When the point $(t_*, x_*)$ slides along the parabola clockwise, the corresponding rays ${\mathfrak x}_{pf}(t; t_*, x_*) = {\mathfrak u}(t; t_*, x_*)$ sweep 
the interior of $C$. For every point $(t_0, x_0)$ in the interior of $C$, there is a unique point 
$(t_*, x_*)$ (with $t_* = {\mathfrak t}_p(t_0, x_0)$) on the parabola such that ${\mathfrak u}(t_0; t_*, x_*) = x_0$. As a consequence, 
${\mathfrak u}(t; t_0, x_0) = {\mathfrak u}(t; t_*, x_*)$ describes the ray-solution starting 
at $(t_0, x_0)$. 

The second choice of the semi-process coincides with the first one on the interior of $C$, while 
on the parabola we pick the singular solution:  ${\mathfrak u}(t; t_0, - t_0^2/4) = - t^2/4$. 
 
The third choice is to select the trajectories (rays) that touch the parabola in the future 
and then continue along the parabola. In other words, 
\be
{\mathfrak u}(t; t_0, x_0) = - \frac{t^2}{4}\quad\text{if}\quad x_0 = - \frac{t_0^2}{4}\,,
\ee
and, if $(t_0, x_0)$ is in the interior of $C$, then  
\be
{\mathfrak u}(t; t_0, x_0) =
\begin{cases}
{\mathfrak x}_f(t; t_0, x_0) & t_0\le t\le {\mathfrak t}_f(t_0, x_0)\,,\\
- \frac{t^2}{4} & t > {\mathfrak t}_f(t_0, x_0)\,.
\end{cases}
\ee
All three semi-processes are Borel measurable. 
Depending on the choice of a countable family $\Phi$ of continuous bounded functions that separate 
the points of $X = \R$, the maximization procedure described in the proof of Theorem~\ref{thm:main} 
will pick out one of the three semi-processes.


\end{document}